\newcounter{theorem}
\newtheorem{example}{Example}
\newtheorem{remark}{Remark}
\newtheorem{proposition}{Proposition}
\newtheorem{lemma}{Lemma}
\newenvironment{proof}[1][Proof]{\textbf{#1.} }{\rule{0.5em}{0.5em}}
\title{A Class of M\"{o}bius Iterated Function Systems \date{}}
\author{Gökçe ÇAKMAK, \footnote{Anadolu University, Science Faculty, Department of Mathematics, 26470,
Eskişehir, Turkey,  e-mails: gokcecakmak@anadolu.edu.tr, adeniz@anadolu.edu.tr, skocak@anadolu.edu.tr}
\thanks{Corresponding Author.}
 \and Ali DENİZ$^*$, \and  Şahin KOÇAK $^*$}
\begin{document}
\maketitle
\begin{abstract}
We give a procedure to produce Möbius iterated function systems  (MIFS) on the unit disc in the complex plane.
\end{abstract}
\textbf{Keywords:}{ Iterated function system, Möbius iterated function system, Möbius transformation, attractors.}
\\\textbf{MSC2010:}28A80,30D05
\section{Introduction}
\par Iterated function systems (IFS) devised by Hutchinson (\cite{Hutchinson}) are primary tools to generate (and understand) a large class of fractals. It is however not an easy matter to analyze systems going beyond the similarity transformations on Euclidean spaces. A particularly interesting class of IFS are Möbius iterated function systems (MIFS), but there are rather few examples of this class in the literature (see \cite{Vince}). In this note, we give a simple procedure to produce contractive M\"{o}bius functions on the unit disc in the complex plane which could serve for further investigations from various fractal points of view such as attractor properties, dimension considerations, tube formulas etc. Using Proposition \ref{prop2} below one can produce easily all Möbius transformations which map the unit disc of the complex plane contractively into itself and use them to construct MIFS's at will.
\par For the convenience of the reader, we first give the following lemma:
\begin{lemma} \label{lemma}
	Let $ \varphi(z)=\dfrac{az+b}{cz+d} $ $( ad-bc=1) $ be a M\"{o}bius transformation on $ \mathbb{C}\cup \{\infty\} $ and $ \mathcal{S}\subset \mathbb{C} $ be a circle with center $ M $ and radius $ R $. Then, $ \varphi(\mathcal{S}) $ is a circle with center $M'=\dfrac{(aM+b)(\overline{cM}+\overline{d})-R^2a\overline{c}}{(cM+d)(\overline{cM}+\overline{d})-R^2c\overline{c}}  $ and radius $ R'= \dfrac{R}{||cM+d|^2-R^2|c|^2|} $.
	\par (The circle $ \mathcal{S} $ is mapped onto a line iff $ cz+d $ vanishes at a point on $ \mathcal{S} $; this can also be expressed  as $ |cM+d|=R|c| $.)
\end{lemma}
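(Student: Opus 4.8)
\section*{Proof plan}

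My plan is to reduce the statement to the elementary fact that a M\"obius map carries the locus of a Hermitian equation $A\,w\bar w + Bw + \bar B\bar w + D = 0$ (with $A,D\in\mathbb{R}$) to another such locus, and then simply to read off the center and radius. First I would write $\mathcal{S}$ as $(z-M)(\bar z-\bar M)=R^2$ and describe the image by substituting the inverse map $z=\varphi^{-1}(w)=\dfrac{dw-b}{-cw+a}$; this is legitimate precisely because the normalization $ad-bc=1$ makes $\varphi$ invertible with this explicit inverse. Abbreviating $P=cM+d$ and $Q=aM+b$, a short computation gives $z-M=\dfrac{Pw-Q}{a-cw}$ together with the conjugate expression for $\bar z-\bar M$.

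Next I would clear denominators in $(z-M)(\bar z-\bar M)=R^2$. Collecting the $w\bar w$, $w$, $\bar w$ and constant terms produces a Hermitian equation $A\,w\bar w+Bw+\bar B\bar w+D=0$, where $A=|P|^2-R^2|c|^2=|cM+d|^2-R^2|c|^2$ and $D=|Q|^2-R^2|a|^2$ are real, and $B=-(P\bar Q-R^2c\bar a)$. Assuming $A\neq 0$, this is the equation of a circle; comparing it with the normal form $w\bar w-M'\bar w-\bar M'w+|M'|^2-R'^2=0$ yields at once $M'=-\bar B/A=\dfrac{Q\bar P-R^2a\bar c}{A}$, which is exactly the asserted center once one notes $\bar P=\overline{cM}+\bar d$, and $R'^2=|M'|^2-D/A=(|B|^2-AD)/A^2$.

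The one place that requires a genuine idea rather than bookkeeping is the simplification of the radius. The identity I expect to carry the argument is the factorization $|B|^2-AD=R^2\,|aP-cQ|^2$, which one confirms by expanding both sides and matching the four types of terms. The payoff is that $aP-cQ=a(cM+d)-c(aM+b)=ad-bc=1$, so the bracket collapses and $|B|^2-AD=R^2$. Hence $R'=R/|A|=R/\big||cM+d|^2-R^2|c|^2\big|$, as claimed. Thus the normalization $ad-bc=1$ is what makes the radius formula so clean, and spotting this factorization is the main (modest) obstacle.

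Finally, for the parenthetical remark I would observe that the whole derivation assumed $A\neq 0$; when $A=|cM+d|^2-R^2|c|^2=0$, i.e.\ $|cM+d|=R|c|$, the quadratic term vanishes and the locus degenerates to a line. This condition is equivalent to $\big|{-d/c}-M\big|=R$, that is, to the pole $z=-d/c$ (the unique zero of $cz+d$) lying on $\mathcal{S}$, which is precisely the stated criterion.
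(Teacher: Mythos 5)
Your proposal is correct and follows essentially the same route as the paper: substitute the explicit inverse $z=\dfrac{dw-b}{a-cw}$ into $|z-M|^2=R^2$, collect the result into a Hermitian equation in $w,\bar w$, and read off the center and radius from its coefficients. In fact your write-up is slightly more complete than the paper's, since the identity $|B|^2-AD=R^2|aP-cQ|^2=R^2$ (using $ad-bc=1$) supplies the radius computation that the paper dismisses as ``a straightforward calculation.''
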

\begin{proof}
	The circle $ \mathcal{S} $ can be expressed as $ |z-M|^2=R^2 $; if we denote \linebreak $ \varphi(z)=\dfrac{az+b}{cz+d} $  by $ w $ and insert $ z=\dfrac{dw-b}{a-cw} $ into the equation of the circle we obtain $  \left| \dfrac{dw-b}{a-cw}-M\right|^2=R^2 $, i.e.
	\[ (dw-b-M(a-cw))(\overline{dw}-\overline{b}-\overline{M}(\overline{a}-\overline{cw}))=R^2(a-cw)(\overline{a}-\overline{cw}) \]
	or
	\begin{eqnarray*}
		(|d|^2+\overline{c}d\overline{M}+c\overline{d}M+|cM|^2-R^2|c|^2)|w|^2
		-(\overline{b}d+\overline{a}d\overline{M}+\overline{b}cM+\overline{a}c|M|^2-R^2\overline{a}c)w
		\\-(b\overline{d}+b\overline{cM}+a\overline{d}M+a\overline{c}|M|^2-R^2a\overline{c})\overline{w}=R^2|a|^2
		-|b|^2-\overline{a}b\overline{M}-a\overline{b}M-|aM|^2.
	\end{eqnarray*}
This equation shows that $ \varphi(\mathcal{S}) $ is a circle with center \[ M'=\dfrac{b\overline{d}+b\overline{cM}+a\overline{d}M+a\overline{c}|M|^2-R^2a\overline{c} }{|d|^2+\overline{c}d\overline{M}+c\overline{d}M+|cM|^2-R^2|c|^2}. \]
\par With a straightforward calculation one can see that the radius of $ \varphi(\mathcal{S}) $ is\linebreak $ R'= \dfrac{R}{||cM+d|^2-R^2|c|^2|} $. (As the expression for $ R' $ shows, $ \mathcal{S} $ is mapped onto a line iff $ |cM+d|=R|c| $.)
\end{proof}

\section{Contractive Möbius Transformations on the Unit Disc}
We now give the following proposition which characterizes when a Möbius transformation $ \varphi(z)=\dfrac{az+b}{cz+d} $ maps the unit disc $ \mathbb{D} =\{ z\in \mathbb{C}:|z|\leq 1 \}$ contractively into itself. (``Contractively" means that there exists $ 0<s<1 $ such that $ |\varphi(z)-\varphi(w)|\leq s|z-w|$ for $z,w\in \mathbb{D}$.)

\begin{proposition} \label{prop1}
	Let $ \varphi(z)=\dfrac{az+b}{cz+d} $ $( ad-bc=1) $ be a M\"{o}bius transformation on $\mathbb{C}\cup \{\infty \}$.
	\begin{enumerate}
		\item $ \varphi $ maps the unit disc $ \mathbb{D} $ into itself $( \varphi(\mathbb{D})\subset \mathbb{D} )$ if and only if the following two conditions hold:
		\begin{enumerate} [i.]
			\item $ |c|<|d| $
			\item $ |a\overline{c}-b\overline{d}|+1\leq |d|^2-|c|^2 $
		\end{enumerate}
		\item $ \varphi $ maps $ \mathbb{D} $ contractively into itself if and only if the following two conditions hold:
		\begin{enumerate}[i.]
			\item $ |d|-|c|>1 $
			\item $ |a\overline{c}-b\overline{d}|+1\leq |d|^2-|c|^2 $
		\end{enumerate}
	\end{enumerate}
\end{proposition}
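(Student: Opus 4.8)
The plan is to handle the two parts with different tools: part~1 by feeding the unit circle into the Lemma and reading off a containment condition, and part~2 by exploiting the explicit form of $\varphi'$. For part~1, I would apply the Lemma to the unit circle $\mathcal{S}$, which has center $M=0$ and radius $R=1$. Substituting these values (and using $ad-bc=1$) into the formulas of the Lemma gives that $\varphi(\mathcal{S})$ is a circle with center $M'=\frac{b\overline{d}-a\overline{c}}{|d|^2-|c|^2}$ and radius $R'=\frac{1}{\bigl||d|^2-|c|^2\bigr|}$, where $|M'|$ already features the quantity $|a\overline{c}-b\overline{d}|$ appearing in condition~(ii).

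Since $\varphi$ is a homeomorphism of the Riemann sphere, the image $\varphi(\mathbb{D})$ is one of the two caps bounded by $\varphi(\mathcal{S})$. I would argue that $\varphi(\mathbb{D})\subset\mathbb{D}$ forces $\varphi(\mathbb{D})$ to be the bounded disc enclosed by $\varphi(\mathcal{S})$, which happens exactly when the pole $z=-d/c$ lies outside $\mathbb{D}$; this is the inequality $|d|>|c|$, i.e.\ condition~(i), and it simultaneously guarantees (via the parenthetical remark in the Lemma) that $\varphi(\mathcal{S})$ is a genuine circle rather than a line. Granting (i), containment of a disc of center $M'$ and radius $R'$ in $\mathbb{D}$ is equivalent to $|M'|+R'\leq 1$; clearing the positive denominator $|d|^2-|c|^2$ converts this into $|a\overline{c}-b\overline{d}|+1\leq|d|^2-|c|^2$, which is condition~(ii). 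Conversely, (i) and (ii) reconstruct $|M'|+R'\leq1$, giving the reverse implication.

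For part~2, I would first compute $\varphi'(z)=\frac{ad-bc}{(cz+d)^2}=\frac{1}{(cz+d)^2}$, so that $|\varphi'(z)|=\frac{1}{|cz+d|^2}$. Because $\mathbb{D}$ is convex, integrating $\varphi'$ along the segment from $w$ to $z$ shows that $\varphi$ is a contraction with constant $s$ iff $\sup_{z\in\mathbb{D}}|\varphi'(z)|\leq s$; hence contractivity for some $s<1$ is equivalent to $\sup_{z\in\mathbb{D}}|\varphi'(z)|<1$, i.e.\ to $\min_{|z|\leq1}|cz+d|>1$. The map $z\mapsto cz+d$ carries $\mathbb{D}$ onto the closed disc of center $d$ and radius $|c|$, whose minimum modulus equals $|d|-|c|$ whenever $|d|\geq|c|$; thus the contractivity requirement is precisely $|d|-|c|>1$, which is condition~(i) of part~2. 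Finally I would combine the two ingredients: contractivity-into-itself means the contraction property \emph{and} $\varphi(\mathbb{D})\subset\mathbb{D}$. Since $|d|-|c|>1$ already implies $|c|<|d|$, the into-itself requirement collapses to condition~(ii) alone, so the pair of conditions in part~2 is equivalent to contractive self-mapping.

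The step I expect to be the main obstacle is not the algebra but the two ``soft'' justifications. First, the orientation argument in part~1—showing that $\varphi(\mathbb{D})$ is the bounded enclosed disc rather than its exterior—must be pinned down carefully through the location of the pole, since the Lemma only supplies the boundary circle, not which side maps to which. Second, the equivalence between the Lipschitz definition of contractivity and the bound $\sup_{\mathbb{D}}|\varphi'|<1$ requires convexity of $\mathbb{D}$ for one direction and compactness (so the supremum is attained) to promote a pointwise strict bound into a single uniform constant $s<1$; overlooking this could conflate $|\varphi'(z)|<1$ everywhere with genuine uniform contraction.
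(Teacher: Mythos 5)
Your proposal is correct and takes essentially the same route as the paper: part 1 applies the Lemma to the unit circle ($M=0$, $R=1$) and reduces containment to $|M'|+R'\leq 1$ plus an orientation check, and part 2 uses $\varphi'(z)=\dfrac{1}{(cz+d)^2}$ together with integration along segments and compactness. The only cosmetic differences are that you fix which side of $\varphi(\mathbb{S})$ the disc maps to via the location of the pole $-d/c$ where the paper instead requires $|\varphi(0)-M'|<R'$, and that you compute $\min_{z\in\mathbb{D}}|cz+d|=|d|-|c|$ directly where the paper evaluates $|cz+d|$ at the extremal point $z=-\dfrac{|c|d}{c|d|}$.
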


	\begin{proof}
		For the first part, we remark that $ \varphi $ maps the unit disc into itself if and only if $ |M'|+R' \leq 1 $, where $ M' $ is the center of $ \varphi(\mathbb{S})$ $ ( \mathbb{S}=\{ z\in \mathbb{C}: |z|=1 \}) $ and $ R' $ is the radius of $ \varphi(\mathbb{S})$; and additionally, $ \varphi(0) $ is mapped into the inside of $ \varphi(\mathbb{S})$, i.e. $ |\varphi(0)-M'|<R' $.
		\par By the lemma above (with $ M=0 $ and $ R=1 $) we have $ M'= \dfrac{b\overline{d}-a\overline{c}}{|d|^2-|c|^2}$ and $ R'=\dfrac{1}{||d|^2-|c|^2|} $. (For the circle $ \mathbb{S} $ to be mapped into a circle it must be $ |c|\neq |d| $.) Inserting the formulas for $ M' $ and $ R' $ into the above inequalities we get;
		\begin{enumerate}
			\item
			\begin{enumerate}[i.]
				\item
				$ 	|\varphi(0)-M'|=\left| \dfrac{b}{d}-\dfrac{b\overline{d}-a\overline{c}}{|d|^2-|c|^2} \right|=\dfrac{|bd\overline{d}-bc\overline{c}-bd\overline{d}+a\overline{c}d|}{|d|||d|^2-|c|^2|}=\dfrac{|c||ad-bc|}{|d|||d|^2-|c|^2|}\\=\dfrac{|c|}{|d|||d|^2-|c|^2|}<R'=\dfrac{1}{||d|^2-|c|^2|} , \text{which gives}   \;|c|<|d|. $
				\item
				$ |M'|+R' =\dfrac{|b\overline{d}-a\overline{c}|}{||d|^2-|c|^2|}+\dfrac{1}{||d|^2-|c|^2|} =\dfrac{|b\overline{d}-a\overline{c}|+1}{||d|^2-|c|^2|}\leq 1, \text{which gives} \\ |a\overline{c}-b\overline{d}|+1\leq |d|^2-|c|^2 \;\text{since} \; |c|<|d|. $
			\end{enumerate}
			\end{enumerate}
		Conversely, if these two conditions are satisfied then
		\begin{enumerate}
			\item
			\begin{enumerate}[i.]
				\item
				$ 	|\varphi(0)-M'|<R' \; \text{holds, since} \\ |\varphi(0)-M'|=\dfrac{|c|}{|d|||d|^2-|c|^2|}<\dfrac{1}{||d|^2-|c|^2|}=R' \; \text{by}\; |c|<|d|.$
				\item
				$ |M'|+R' \leq 1 \; \text{holds, since} \; |M'|+R'=\dfrac{|b\overline{d}-a\overline{c}|+1}{||d|^2-|c|^2|}=\dfrac{|b\overline{d}-a\overline{c}|+1}{|d|^2-|c|^2} \\ \text{by}\; |c|<|d|\; \text{and thus} \;|M'|+R'\leq 1\; \text{by the second condition.}\;  $
			\end{enumerate}
		\end{enumerate}
		\par We now consider the second part of the proposition. Let us first assume that the two given conditions are satisfied. Then by the first part of the proposition, the disc $ \mathbb{D} $ is mapped into itself. So we just need to show that $ \varphi $ is a contraction. Since $ \varphi'(z)=\dfrac{1}{(cz+d)^2} $ it will be enough to show that $ |cz+d|>1+\alpha $ for all $ z\in \mathbb{D}$ for some $ \alpha>0 $, because we could then write
		\begin{eqnarray*}
		&|\varphi(z)-\varphi(w)|=\left|\, \int \limits_{[z,w]}\varphi'(\zeta)d\zeta \right| \leq \int\limits_{[z,w]}|\varphi'(\zeta)|d\zeta  \leq \mathop{\max}_{\zeta\in [z,w]} |\varphi'(\zeta)||z-w|\\&<\dfrac{1}{(1+\alpha)^2}|z-w|\qquad  (\text{[z,w] is the segment connecting z and w.})
		\end{eqnarray*} 
		and thus $ \varphi$ would be a contraction on $ \mathbb{D} $. Indeed, $ |cz+d|\geq |d|-|cz|\geq |d|-|c|>1 $ and since $ \mathbb{D} $ is compact and $ |cz+d| $ is continuous, we can write $ |cz+d|>1+\alpha $ for some $ \alpha>0 $.
		\par  Conversely, let us assume that $ \varphi $ maps $ \mathbb{D} $ contractively into itself. By the first part of the proposition, the second condition holds and we must show the first condition $ |d|-|c|>1 $. Since $ \varphi $ is a contraction we can write  $ |\varphi(z)-\varphi(w)|\leq s|z-w| $ for some $ 0<s<1 $. This gives $ |\varphi'(z)|\leq s<1 $. Since $ |\varphi'(z)|=\dfrac{1}{|cz+d|^2} $ we get $ |cz+d|\geq \dfrac{1}{\sqrt s} >1 $. Inserting $ z=-\dfrac{|c|d}{c|d|} $ into this inequality, we get  $ \left| -c\dfrac{|c|d}{c|d|}+d \right|>1 $, $ \dfrac{|-d|c|+d|d||}{|d|}>1 $, thus $ |d|-|c|>1 $ because $ |d|>|c| $ by the first part of the proposition. (It is obvious that $ d $ cannot vanish because $ |d|>|c| $ and in the case of $ c=0 $ we get $ |d|>1 $ by $ |cz+d|>1$ and thus $ |d|-|c|>1.) $
	\end{proof}
	
\begin{proposition}\label{prop2}
	Let $ r\in \mathbb{R} $ with $ 0<r<1 $ and let $ m $ be a complex number satisfying $ |m|\leq 1-r $. Choose $ c\in \mathbb{C} $ with $ |c|<\dfrac{1-r}{2r} $ and $ d\in \mathbb{C} $ with $ |d|^2=|c|^2+1/r $. Let $ a=mc+r\overline{d} $ and $ b=md+r\overline{c} $.
	\par Then, $ \varphi(z)=\dfrac{az+b}{cz+d} $  maps $ \mathbb{D} $ contractively into itself. (Furthermore, the center of the image of unit circle is $ m $ and its radius is $ r $.)
\end{proposition}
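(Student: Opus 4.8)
The plan is to verify directly that the coefficients produced by this recipe satisfy the two conditions of the second part of Proposition \ref{prop1}, and then to read off the center and radius of the image circle from Lemma \ref{lemma}. First I would check the normalization $ad-bc=1$, which is tacitly required throughout. A short computation gives $ad-bc=(mc+r\overline{d})d-(md+r\overline{c})c=r(|d|^2-|c|^2)=r\cdot\frac{1}{r}=1$, using $|d|^2-|c|^2=1/r$. So $\varphi$ is a properly normalized M\"{o}bius transformation and Proposition \ref{prop1} applies.

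Next I would compute the quantity $a\overline{c}-b\overline{d}$ governing condition (ii). Expanding, $a\overline{c}=m|c|^2+r\overline{cd}$ and $b\overline{d}=m|d|^2+r\overline{cd}$, so the $r\overline{cd}$ terms cancel and $a\overline{c}-b\overline{d}=m(|c|^2-|d|^2)=-m/r$. Hence $|a\overline{c}-b\overline{d}|=|m|/r$, and condition (ii), namely $|a\overline{c}-b\overline{d}|+1\leq |d|^2-|c|^2$, reads $|m|/r+1\leq 1/r$, which is exactly the hypothesis $|m|\leq 1-r$.

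The only step requiring genuine work is condition (i), $|d|-|c|>1$. Since $(|d|-|c|)(|d|+|c|)=|d|^2-|c|^2=1/r$, this is equivalent to $|d|+|c|<1/r$. Writing $|d|=\sqrt{|c|^2+1/r}$ and isolating the square root, the inequality $\sqrt{|c|^2+1/r}<1/r-|c|$ (the right-hand side being positive since $|c|<\frac{1-r}{2r}<1/r$) squares to $|c|^2+1/r<1/r^2-2|c|/r+|c|^2$, which simplifies to $2|c|<\frac{1-r}{r}$, i.e. the hypothesis $|c|<\frac{1-r}{2r}$. Thus both conditions hold and Proposition \ref{prop1}(2) yields that $\varphi$ maps $\mathbb{D}$ contractively into itself.

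Finally, for the supplementary claim I would invoke Lemma \ref{lemma} with $M=0$, $R=1$ (equivalently, the formulas recorded in the proof of Proposition \ref{prop1}): the image of the unit circle has center $M'=\frac{b\overline{d}-a\overline{c}}{|d|^2-|c|^2}=\frac{m/r}{1/r}=m$ and radius $R'=\frac{1}{||d|^2-|c|^2|}=r$. I expect the squaring argument for condition (i) to be the main (though still elementary) obstacle, since everything else collapses immediately once the identity $a\overline{c}-b\overline{d}=-m/r$ is observed.
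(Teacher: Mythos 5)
Your proof is correct and follows essentially the same route as the paper: you verify the two conditions of Proposition \ref{prop1}(2) (the cancellation giving $a\overline{c}-b\overline{d}=m(|c|^2-|d|^2)$ handles condition (ii) exactly as the paper does, and your squaring argument for $|d|-|c|>1$ is a minor algebraic variant of the paper's completion of the square $(|c|+1)^2<|d|^2$), and then read off $M'=m$ and $R'=r$ from Lemma \ref{lemma}. Your explicit check that $ad-bc=1$ is a welcome addition that the paper leaves tacit, since Proposition \ref{prop1} assumes this normalization.
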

	
	\begin{proof}
		By Proposition \ref{prop1} (part 2), it will be enough to show that $ |d|-|c|>1 $ and $ |a\overline{c}-b\overline{d}|+1\leq |d|^2-|c|^2  $. \par For the first condition, note that
		 \[
		|c|<\dfrac{1-r}{2r}=\frac{1}{2}\left( \dfrac{1}{r}-1\right) = \frac{1}{2}(|d|^2-|c|^2-1),
		\]
		which gives $ |c|^2+2|c|+1=(|c|+1)^2<|d|^2 $ and therefore $ |d|-|c|>1 $.
		\par For the second condition, inserting the expression for $ a $ and $ b $ we get
		\[
		a\overline{c}-b\overline{d} = (mc+r\bar{d})\overline{c} - (md+r\bar{c})\overline{d}=mc\bar{c}+r\bar{c}\bar{d}-md\bar{d}-r\bar{c}\bar{d}=m(|c|^2-|d|^2)
		\]
		and thus
		\begin{equation*}
			|a\overline{c}-b\overline{d}|+1=|m|\,||c|^2-|d|^2|+1=|m|(|d|^2-|c|^2)+1=\dfrac{|m|}{r}+1=\dfrac{|m|+r}{r} \leq \frac{1}{r}=|d|^2-|c|^2.
		\end{equation*}
		(To find the center of the image of the unit circle and its radius, insert $ a=mc+r\bar{d} $ and $ b=md+r\bar{c}$ into the formulas for $ M' $ and $ R' $ of Lemma \ref{lemma} with $ M=0 $ and $ R=1 $:	$ 	M'=\dfrac{b\bar{d}-a\bar{c}}{|d|^2-|c|^2}=\dfrac{(md+r\bar{c})\bar{d}-(mc+r\bar{d})\bar{c}}{|d|^2-|c|^2}=\dfrac{md\bar{d}-mc\bar{c}}{|d|^2-|c|^2}=m  $ and
	$ 	R'=\dfrac{1}{||d|^2-|c|^2|}=r.) $
	\end{proof}
\begin{remark}
Any Möbius transformation $ \varphi(z)=\dfrac{az+b}{cz+d} $ $( ad-bc=1) $ mapping the unit disc $ \mathbb{D} $ contractively into itself can be obtained in the way prescribed in Proposition \ref{prop2}. 
\end{remark}
\begin{proof}
	Since $ \varphi(\mathbb{D})\subset \mathbb{D} $, $ r $ and $ m $ are given as natural parameters; $r$ being the radius of $ \varphi(\mathbb{D})$ and $ m $ its center. They satisfy $ 0<r<1 $ ($ r>0 $ since $ \varphi $ is not constant and $ r<1 $ since $ \varphi $ is contractive) and $ |m|+r\leq 1 $. By Proposition \ref{prop1}, $ |d|-|c|>1 $ so that $ r $ can be expressed as $ r=\dfrac{1}{||d|^2-|c|^2|}=\dfrac{1}{|d|^2-|c|^2} $ by Lemma \ref{lemma} and we get $ |d|^2=|c|^2+1/r $. Inserting this into $ |d|-|c|>1 $ we get $ |c|<\dfrac{1-r}{2r} $. On the other hand, $ m=\dfrac{b\bar{d}-a\bar{c}}{|d|^2-|c|^2} $ by Lemma \ref{lemma}, hence $ m=(b\bar{d}-a\bar{c})\dfrac{1}{|d|^2-|c|^2}=(b\bar{d}-a\bar{c})r. $ Now, $ b\bar{d}-a\bar{c}=\dfrac{m}{r} $ and $ad-bc=1$ yield $ a=mc+r\overline{d} $ and $ b=md+r\overline{c} $.
\end{proof}
	\section{Examples}
	\par Using Proposition \ref{prop2} one can produce an infinitude of examples of Möbius iterated function systems. We give below several MIFS's by choosing $ r $, $ m $, $ c $ and $ d $ arbitrarily (only subject to the conditions of Proposition \ref{prop2}) and then determining $ a $ and $ b $ according to Proposition \ref{prop2}. We depict the corresponding attractors also (we used the software Cinderella \cite{Cinderella} to generate them). We remind that, for an IFS consisting of the contractions $ \varphi_1, \varphi_2,..., \varphi_N $ on $ \mathbb{D} $ (or on any complete metric space $X$), the Hausdorff limit $ A $ of the sequence $\varphi^k (Y)=\varphi\circ \varphi\circ...\circ\varphi (Y) $, where $ Y\subset X$ is any compact subset and $ \varphi(Y)=\varphi_1(Y)\cup \varphi_2(Y)\cup...\cup \varphi_N(Y) $, is called the attractor of the IFS $ \{ \varphi_1, \varphi_2,...,\varphi_N \}$. For generalities on iterated function systems see \cite{Barnsley}. 
	
	\begin{example}
		We determine $ \varphi_1$ by choosing $ r_1=0.7, m_1=-0.2+0.2i, c_1=0.2i,  d_1=1.0523+0.601i $; and $ \varphi_2$ by choosing $ r_2=0.6, m_2=-0.3i, c_2=-0.2i, d_2=-1.3064 $. Computing $ a_1, b_1 $ and $ a_2, b_2 $ according to the Proposition \ref{prop2}, one obtains \[ \varphi_1(z)=\dfrac{a_1z+b_1}{c_1z+d_1} =\dfrac{(0.6966-0.4607i)z+(-0.3307-0.0497i)}{(0.2i)z+(1.0523+0.601i)}\] and \[ \varphi_2(z)=\dfrac{a_2z+b_2}{c_2z+d_2}= \dfrac{(-0.8438)z+(0.5119i)}{(-0.2i)z+(-1.3064)} .\]  We show in Fig. \ref{fig1} (a) the images of the unit disc with respect to $ \varphi_1 $ and $ \varphi_2$, and in Fig. \ref{fig1} (b) the emerging attractor inside the unit disc.
	\end{example}

\begin{figure}[h!]
\centering
	\includegraphics[scale=0.4]{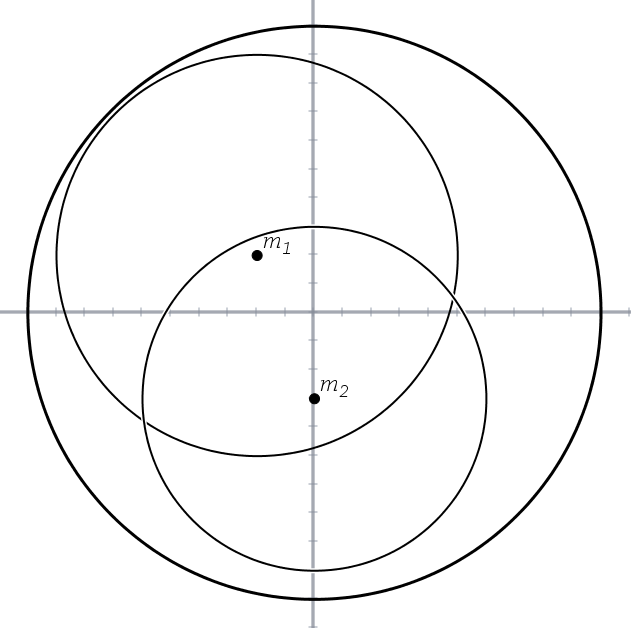}
	\includegraphics[scale=0.5]{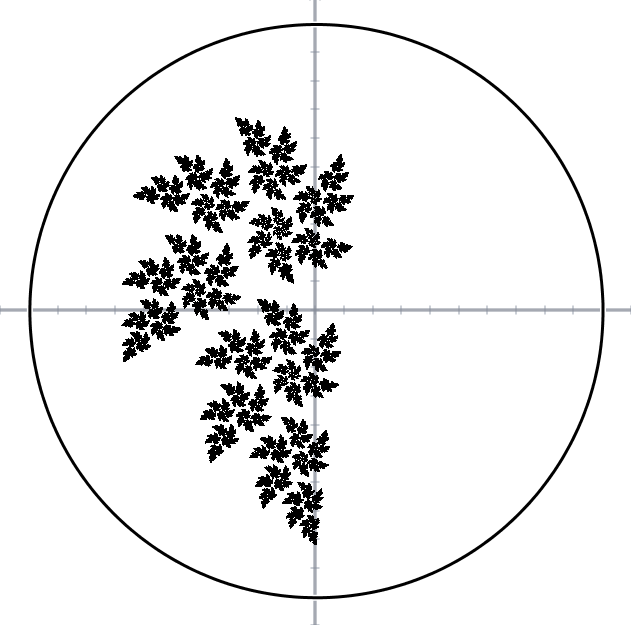}
\subfloat[]{\hspace{.35\linewidth}}
\subfloat[]{\hspace{.35\linewidth}}
\caption{The image of the unit circle with respect to $ \varphi_1 $ is shown with center $ m_1 $ and with respect to $ \varphi_2 $ with center $m_2 $ in (a). The emerging attractor is shown in (b). \label{fig1}}
\end{figure}

	\begin{example}
		We determine $ \varphi_1$ by choosing $ r_1=0.6, m_1=0.1+0.3i, c_1=0.2-0.2i , d_1=-0.0047+1.3216i $; $ \varphi_2$ by choosing $ r_2=0.4, m_2=0.3-0.4i, c_2=0.7i ,\linebreak d_2=0.9102+1.4702i $ and $ \varphi_3$ by choosing $ r_3=0.5, m_3=-0.4-0.2i, c_3=0.3+0.3i , d_3=-1.0108-1.0762i $. Computing $ a_1, b_1 $, $ a_2, b_2 $ and $ a_3, b_3 $ according to the Proposition \ref{prop2}, one obtains the transformations
		\[
		\varphi_1(z)=\dfrac{a_1z+b_1}{c_1z+d_1}=\dfrac{(0.0772-0.753i)z+(-0.277+0.2507i)}{(0.2-0.2i)z+(-0.0047+1.3216i)},
		\]
		\[
		\varphi_2(z)=\dfrac{a_2z+b_2}{c_2z+d_2}=\dfrac{(0.6441-0.3781i)z+(0.8611-0.203i)}{(0.7i)z+(0.9102+1.4702i)}
		\]
		and
		\[
		\varphi_3(z)=\dfrac{a_3z+b_3}{c_3z+d_3}=\dfrac{(-0.5654+0.3581i)z+(0.3391+0.4826i)}{(0.3+0.3i)z+(-1.0108-1.0762i)}.
		\]
		The emerging attractor inside the unit disc is shown in Fig. \ref{fig2}.
	\end{example}
\begin{figure}
	\centering
	\includegraphics[scale=0.4]{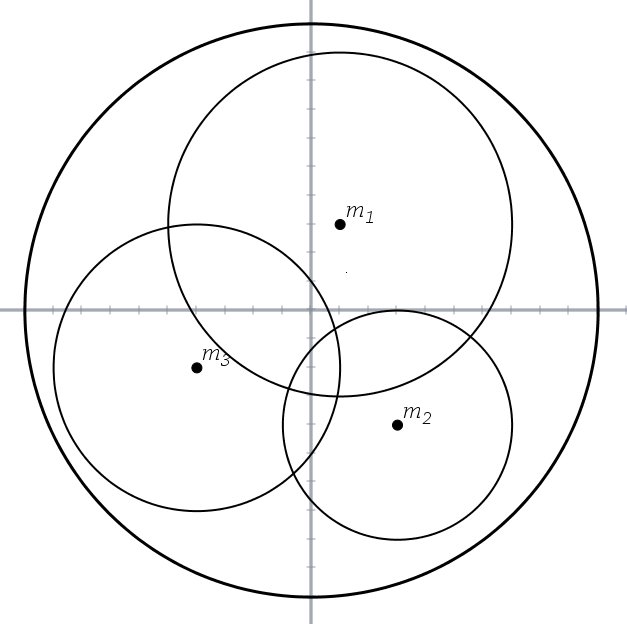}
	\includegraphics[scale=0.5]{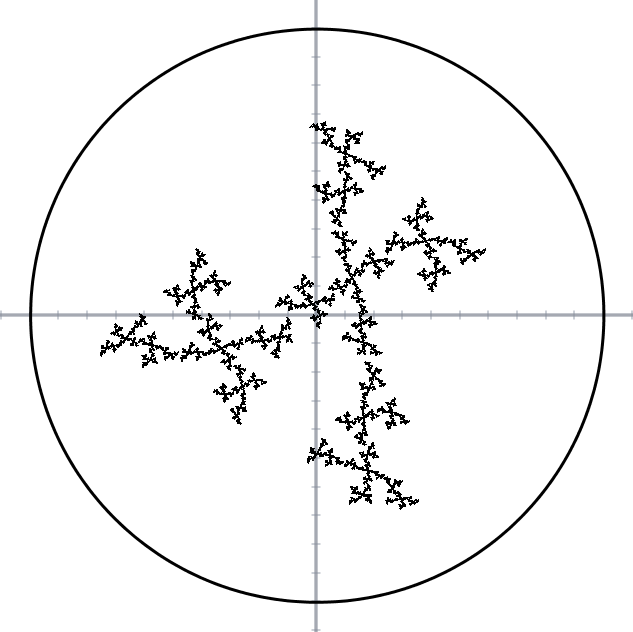}
	\subfloat[]{\hspace{.35\linewidth}}
	\subfloat[]{\hspace{.35\linewidth}}
	\caption{The image of the unit circle with respect to $ \varphi_1 $ is shown with center $ m_1 $, with respect to $ \varphi_2 $ with center $ m_2 $ and with respect to $ \varphi_3 $ with center $ m_3 $ in (a). The emerging attractor is shown in (b). \label{fig2}}
\end{figure}

	\begin{example}
	We determine $ \varphi_1$ by choosing $ r_1=0.6, m_1=-0.0116+0.3887i, c_1=-0.2033-0.1371i , d_1=-1.2886+0.2577i $; $ \varphi_2$ by choosing $ r_2=0.5, m_2=0.3991-0.2685i, c_2=0.2348-0.1316i , d_2=0.2431+1.4189i $ and $ \varphi_3$ by choosing $ r_3=0.5, m_3=-0.4114-0.2082i, c_3=-0.0445+0.3668i , d_3=1.2534-0.752i $. Computing $ a_1, b_1 $, $ a_2, b_2 $ and $ a_3, b_3 $ according to the Proposition \ref{prop2}, one obtains the transformations 
	\[
	\varphi_1(z)=\dfrac{a_1z+b_1}{c_1z+d_1}=\dfrac{(-0.7175-0.232i)z+(-0.2072-0.4216i)}{(-0.2033-0.1371i)z+(-1.2886+0.2577i)},
	\]
	\[
	\varphi_2(z)=\dfrac{a_2z+b_2}{c_2z+d_2}=\dfrac{(0.1799-0.825i)z+(0.5954+0.5668i)}{(0.2348-0.1316i)z+(0.2431+1.4189i)}
	\]
	and
	\[
	\varphi_3(z)=\dfrac{a_3z+b_3}{c_3z+d_3}=\dfrac{(0.7214+0.2344i)z+(-0.6945-0.135i)}{(-0.0445+0.3668i)z+(1.2534-0.752i)}.
	\]
	The emerging attractor inside the unit disc is shown in Fig. \ref{fig3}.
\end{example}
\begin{figure}
	\centering
	\includegraphics[scale=0.335]{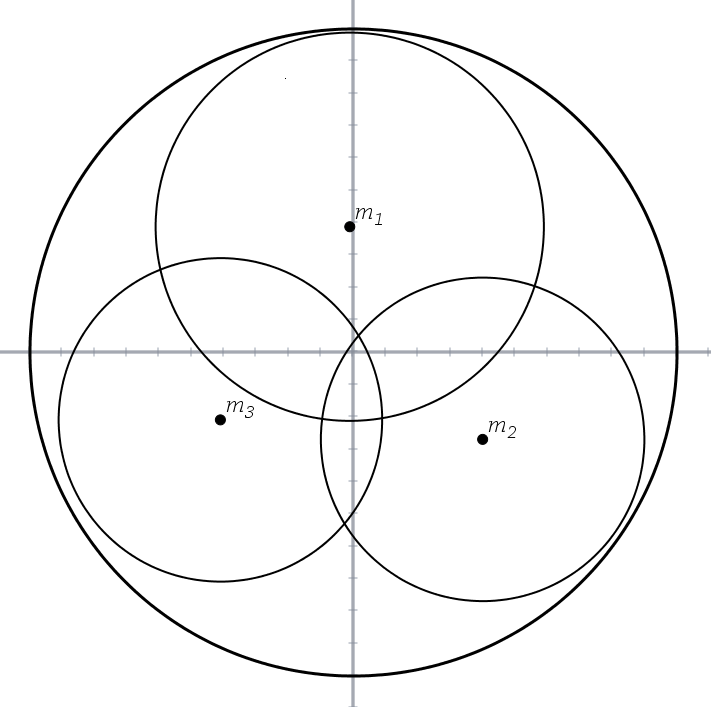}
	\includegraphics[scale=0.45]{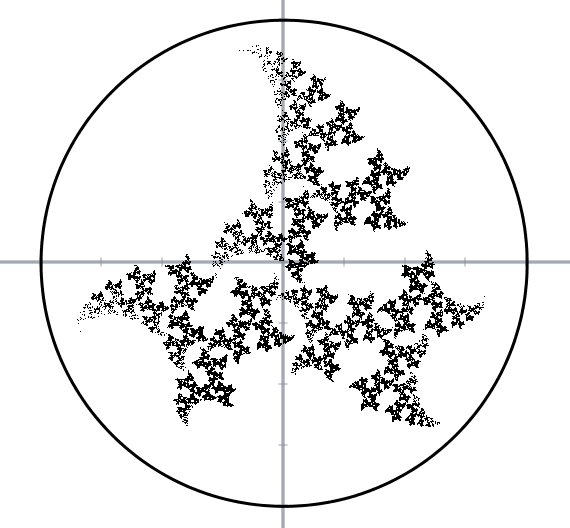}
	\subfloat[]{\hspace{.35\linewidth}}
	\subfloat[]{\hspace{.35\linewidth}}
	\caption{The image of the unit circle with respect to $ \varphi_1 $ is shown with center $ m_1 $, with respect to $ \varphi_2 $ with center $ m_2 $ and with respect to $ \varphi_3 $ with center $ m_3 $ in (a). The emerging attractor is shown in (b). \label{fig3}}

\end{figure}
%%%%%%%%%%%%%%%%%%%%%%%%%%%%%%%%%%%%%%%%%%%%%%%%%%%%%%%%%%%%%%

%{\bf Acknowledgements:}


\begin{thebibliography}{99}
	
	\bibitem{Hutchinson} Hutchinson, John E., {\it Fractals and Self-Similarity}, Indiana Univ. Math. J., Vol 30, Number 5,  (1981), 713-747.
	
	\bibitem{Vince} Vince A., {\it Möbius Iterated Function Systems}, Transactions of the American Mathematical Society, Vol 365, Number 1,  (2013), 491-509.
	
	\bibitem {Cinderella} Richter-Gebert, J., Kortenkamp, U., \textit{The interactive geometry software {C}inderella.}, Springer-Verlag, Berlin, (1999).
	
	\bibitem{Barnsley} Barnsley M. F., {\it Fractals Everywhere} Boston: Academic Press, (1993).
	
	

\end{thebibliography}
\end{document}